\theoremstyle{plain} 
\newtheorem{theorem}{Theorem}
\newtheorem{lemma}[theorem]{Lemma}
\theoremstyle{remark}
\newtheorem*{remark}{Remark}
\theoremstyle{definition}
\newcommand{\F}{\mathcal{F}}
\newcommand{\bR}{\mathbb{R}}
\newcommand{\bN}{\mathbb{N}}
\newcommand{\cB}{\mathcal{B}}
\newcommand{\cC}{\mathcal{C}}
\newcommand{\cD}{\mathcal{D}}
\newcommand{\FF}{\mathcal{F}}
\newcommand{\cA}{\mathcal{A}}
\newcommand{\Baire}{\mathrm{Baire}}
\newcommand{\diam}{\mathrm{diam}}
\newcommand{\Leb}{\mathrm{Leb}}
\newcommand{\la}{\langle}
\newcommand{\ra}{\rangle}
\title{Games characterizing certain families of functions}
\author{Marek Balcerzak}
\address{Institute of Mathematics,
         Lodz University of Technology, al. Politechniki 8,
         93-590 \L\'od\'z,
         Poland}
         \email{marek.balcerzak@p.lodz.pl}
\author{Tomasz Natkaniec}
\address{Institute of Mathematics, Faculty of Mathematics, Physics and Informatics,
University of Gda\'{n}sk, 80-308 Gda\'{n}sk, Poland}
\email{tomasz.natkaniec@ug.edu.pl}
\author{Piotr Szuca}
\address{Institute of Mathematics, Faculty of Mathematics, Physics and Informatics,
University of Gda\'{n}sk, 80-308 Gda\'{n}sk, Poland}
\email{piotr.szuca@ug.edu.pl}
\begin{document}

\subjclass[2010]{Primary: 03E15; Secondary: 03E60, 26A21, 28A05, 54H05, 91A44}

\keywords{topological game, Baire 1 functions, equi-Baire 1 families, $\varepsilon$-gauge, equi-continuity, measurable functions}

\begin{abstract}
We obtain several game characterizations of Baire 1 functions between Polish spaces $X$, $Y$ which extends the recent result of V. Kiss.
Then we propose similar characterizations for equi-Bare 1 families of functions.
Also, using related ideas, we give game characterizations of Baire measurable
and Lebesgue measurable functions.
\end{abstract}

\maketitle

\section{Introduction}
The game approach plays an important role in descriptive set theory. Let us recall Choquet games 
and the Banach-Mazur game in the studies of the Baire category problems \cite[Sec. 8]{Ke}, and
Wadge games with their influence on investigations in the Borel hierarchy \cite[Sec. 21]{Ke}. It is commonly known that
Borel and projective
determinacy provide a strong tool in set-theoretical investigations, cf. \cite[Sec. 20, 38]{Ke}. Note that
various kinds of topological games make fruitful inspirations in topology and analysis, cf. \cite{AD}, \cite{Gr}, \cite{CM}.
They can distinguish new kinds of topological objects, cf. \cite{Gr}, \cite{AD}.

In the recent decades, several nice characterizations for some classes of regular functions were obtained.
Duparc \cite{D} and Carroy \cite{C} characterized Baire 1 functions from $\bN^\bN$ into itself by using the so-called eraser game (for more applications of this game, see \cite{CD}).
Other significant results for different classes of functions between Polish
zero-dimensional spaces are due to
Andretta \cite{An} (a game characterization of ${\pmb \Delta^0_2}$-measurable functions), Semmes \cite{Se}
(Borel functions), Nobrega \cite{No} (Baire class $\xi$ functions)\footnote{We would like to thank the reviewer 
  for noting that this result was announced by Louveau and Semmes at a conference in 2010.}
and Motto Ros \cite{Ro} (piecewise defined functions).

This note is motivated by Kiss \cite{Ki}, who introduced a game characterizing Baire class 1 functions between arbitrary two Polish spaces.
This improved the results by Duparc \cite{D} and Carroy \cite{C} that have been mentioned above.
Another idea characterizing Baire 1, real-valued functions, has been presented in \cite{EFKNPP}. 
Last but not least,  a game that characterizes Baire class 1 functions between arbitrary separable metrizable spaces 
has been defined by Notaro in very recent paper \cite{LN}.

Our first aim in this paper is to extend the result by Kiss. We simplify the proof of a harder implication of his result
by the use of $\varepsilon$-$\delta$ characterization of Baire 1 functions. Then we modify the game defined by Kiss
in two other manners, 
one in which Player II plays points in a space, and another in which Player II plays sets. 
	Whereas in the earlier versions
	of the game, considered by Kiss, Player II was playing in a space containing the range of a function, 
	here we let Player II play in the
	domain. This allows us to give strong game-theoretical characterizations 
	 of equi-Baire 1 families with both
	 a point-based and a set-based game, and finally, characterizations of Baire-measurable
	 and Lebesgue-measurable functions with set-based games.
 
We will use the following reasoning scheme throughout this work. 
\begin{lemma}\label{lem}
Let $G(f)$ be a game with a parameter function $f\in Y^X$. For a given class of functions $\FF\subset Y^X$ assume that:
\begin{enumerate}
	\item if $f\in\FF$ then Player II has a winning strategy in the game $G(f)$, and
	\item
	if $f\not\in \FF$ then Player I has a winning strategy in $G(f)$.
\end{enumerate}
Then the game $G(f)$ is determined and the class $\FF$ can be characterized by $G_f$:
\begin{enumerate}
	\item[(1')]
	$f\in\FF$ if and only if Player II has a winning strategy in the game $G(f)$, and
	\item[(2')] $f\not\in \FF$ if and only if  Player I has a winning strategy in $G(f)$.
\end{enumerate}
\end{lemma}

Assume that $X$ and $Y$ are Polish spaces. Through the paper, we assume that $d_X$ and $d_Y$ are the respective metrics in $X$ and $Y$.

Let us state preliminary facts on Baire 1 functions.
A function $f\colon X\to Y$ between Polish spaces $X, Y$ is called \emph{Baire class 1} whenever the preimage
$f^{-1}[U]$ is $F_\sigma$ in $X$ for any open set $U$ in $Y$. If $Y=\bR$, this is equivalent to the property that $f$
is the limit of a pointwise convergent sequence of continuous functions, see e.g. \cite[Theorem 24.10]{Ke}.

In the literature, we encounter various 
conditions which characterize the class of Baire 1 functions. The classical characterization given by Baire says that
$f$ is Baire 1 if and only if $f\restriction P$ has a point of continuity for every non-empty closed set $P\subseteq X$. 
This is the so-called \emph{Pointwise Continuity Property}, in short (PCP), see e.g.~\cite{B}.
An $\varepsilon$-$\delta$ characterization of Baire 1 functions, obtained in~\cite{LTZ}, says the following.
A function $f\colon X\to Y$ is Baire 1 whenever, for any positive number $\varepsilon$, there is a positive function $\delta_\varepsilon\colon X\to\bR$
such that for any $x_0,x_1\in X$,
\begin{equation} \label{star}
d_X(x_0,x_1)<\min\left\{\delta_\varepsilon(x_0),\delta_\varepsilon(x_1)\right\} \textup{\ implies\ }  d_Y(f(x_0),f(x_1))<\varepsilon.
\end{equation}
We will call such a $\delta_\varepsilon$ an \emph{$\varepsilon$-gauge for $f$}.

We say that a family $\F\subseteq Y^X$
is \emph{equi-continuous at a point $x\in X$} whenever
\begin{equation}\label{star1}
	\forall_{\varepsilon>0}\;\exists_{\delta>0}\;\forall_{f\in\F}\; \left(d_X\left(x,x_0\right)<\delta\Rightarrow 
	d_Y\left(f(x),f\left(x_0\right)\right)<\varepsilon\right).
	\end{equation}
$\F$ is equi-continuous if it is equi-continuous at every $x\in X$.

A family $\F\subseteq Y^X$ is said to fulfil the \emph{Point of Equicontinuity Property} ($\F$ has $(\textrm{PECP})$, in short)
if for every non-empty closed $P\subseteq X$, the family
$$\F\restriction P:=\{f\restriction P\colon f\in\F\}$$
has a point of equicontinuity.

We say that a family $\F\subseteq Y^X$ is \emph{equi-Baire 1}
if for any positive number $\varepsilon$ there is a positive function $\delta_\varepsilon\colon X\to\bR_+$
such that for any $x_0,x_1\in X$ and $f\in\mathcal{F}$ the condition (\ref{star}) holds
(i.e. all $f\in\FF$ have a family of common $\varepsilon$-gauges).
Clearly,
every equi-continuous family is equi-Baire 1 and has $\textrm{(PECP)}$, and the opposite implications do not hold.
(In fact, if $\FF$ is an equi-continuous family and $\varepsilon>0$ then there is $\delta>0$ which satisfies condition (\ref{star1}). 
Then the constant function $\delta_\varepsilon:=\delta$ satisfies (\ref{star}). 
On the other hand, if $f\in\bR^\bR$ is Baire 1 function that is not continuous, 
then the family $\{ f\}$ is equi-Baire 1 but not equi-continuous, see \cite{A}.)

Both definitions were introduced by D.~Lecomte in \cite{Le}. He proved the following equivalence.

\begin{theorem}[\hbox{\cite[Prop.~32]{Le}}]\label{thm:lecomte}
$\F$ has $(\textrm{PECP})$ if and only if
$\F$ is equi-Baire 1.
\end{theorem}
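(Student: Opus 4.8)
The plan is to prove the two implications separately, treating Theorem~\ref{thm:lecomte} as the uniform (family) analogue of the classical equivalence between Baire's pointwise continuity property and the $\varepsilon$-$\delta$ gauge condition~(\ref{star}): for a singleton $\F=\{f\}$, ``equi-Baire 1'' is exactly the existence of $\varepsilon$-gauges for $f$ and $\textrm{(PECP)}$ is the usual (PCP). The unifying tool will be the \emph{family oscillation}: for a nonempty closed $P\subseteq X$ and $x\in P$, writing $B(x,r)=\{y\in X:d_X(x,y)<r\}$, set
\[
\osc_P(x) \;=\; \inf_{r>0}\ \sup\bigl\{\, d_Y(f(y),f(z)) : y,z\in P\cap B(x,r),\ f\in\F \,\bigr\}.
\]
A triangle-inequality computation shows that $x$ is a point of equicontinuity of $\F\restriction P$ if and only if $\osc_P(x)=0$, and that each sublevel set $\{x\in P:\osc_P(x)<\varepsilon\}$ is relatively open (upper semicontinuity of oscillation). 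Hence $\textrm{(PECP)}$ for a fixed $P$ reduces to showing that the relatively $G_\delta$ set $\{x\in P:\osc_P(x)=0\}=\bigcap_m\{\osc_P<1/m\}$ is nonempty.

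For the implication \emph{equi-Baire 1 $\Rightarrow$ $\textrm{(PECP)}$}, I would fix a nonempty closed $P$ (which is Polish, hence Baire) and prove that each $\{\osc_P<\varepsilon\}$ is \emph{dense} in $P$; Baire's theorem then makes $\bigcap_m\{\osc_P<1/m\}$ a dense $G_\delta$, so it is nonempty. Given $\varepsilon$ and a nonempty relatively open $V\subseteq P$, take a common $(\varepsilon/4)$-gauge $\delta$ and the level sets $C_k=\{x\in P:\delta(x)\ge 1/k\}$, which cover $P$. Since $P=\bigcup_k\cl_P C_k$, the open set $\bigcup_k\interior_P(\cl_P C_k)$ is dense, so I can choose $k$ and a point $x_0\in V\cap\interior_P(\cl_P C_k)$ together with a small relatively open $W\ni x_0$ contained in $\cl_P C_k\cap V$ with $\diam W$ tiny relative to $1/k$. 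The crucial device is that $C_k$ is dense in $W$: every $y\in W$ has a ``buddy'' $y'\in C_k$ as close as desired, and since $\delta(y')\ge 1/k$ the gauge condition~(\ref{star}) yields $d_Y(f(y),f(y'))<\varepsilon/4$ for all $f\in\F$. Applying~(\ref{star}) a second time to two buddies $y',z'\in C_k$ (legitimate once $\diam W$ is small) and chaining through three uses of~(\ref{star}) bounds $d_Y(f(y),f(z))$ by $3\varepsilon/4$ for all $y,z\in W$ and all $f\in\F$, whence $\osc_P(x_0)<\varepsilon$. This proves density.

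For the converse \emph{$\textrm{(PECP)}$ $\Rightarrow$ equi-Baire 1}, I would fix $\varepsilon$ and build a common $\varepsilon$-gauge by transfinite peeling. Put $P_0=X$; given a nonempty closed $P_\alpha$, use $\textrm{(PECP)}$ to select a point of equicontinuity $x_\alpha$ of $\F\restriction P_\alpha$ and a radius $r_\alpha>0$ with $d_Y(f(x_\alpha),f(y))<\varepsilon/2$ whenever $y\in P_\alpha$ and $d_X(x_\alpha,y)<r_\alpha$. The relatively open ``slab'' $G_\alpha:=P_\alpha\cap B(x_\alpha,r_\alpha)$ then has family oscillation $<\varepsilon$ (using $x_\alpha$ as a common hub). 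Set $P_{\alpha+1}=P_\alpha\setminus G_\alpha$, which is closed and strictly smaller, and intersect at limit stages. Because $X$ is second countable, a strictly decreasing transfinite chain of closed sets is countable, so $P_\theta=\emptyset$ for some countable $\theta$, the slabs $\{G_\alpha\}$ partition $X$, and each $x$ has a well-defined rank $\alpha(x)$ with $x\in G_{\alpha(x)}$. Now define $\delta_\varepsilon(x)=d_X(x,P_{\alpha(x)+1})$, which is positive since $x$ lies outside the closed set $P_{\alpha(x)+1}$ (set it to $1$ when that set is empty). If $d_X(x_0,x_1)<\min\{\delta_\varepsilon(x_0),\delta_\varepsilon(x_1)\}$ and, say, $\alpha(x_0)\le\alpha(x_1)$, then $x_1\notin P_{\alpha(x_0)+1}$ (otherwise $d_X(x_0,x_1)\ge\delta_\varepsilon(x_0)$), which forces $\alpha(x_1)=\alpha(x_0)$; so $x_0,x_1$ lie in the same slab $G_{\alpha(x_0)}$ and $d_Y(f(x_0),f(x_1))<\varepsilon$ for all $f\in\F$. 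Thus $\delta_\varepsilon$ is a common $\varepsilon$-gauge, and $\F$ is equi-Baire 1.

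The hard part will be the ``buddy'' step in the first implication. The gauge condition~(\ref{star}) controls a pair of points only when \emph{both} have large gauge value, whereas Baire category delivers merely a point in the interior of the \emph{closure} of a level set $C_k$, around which $C_k$ is only dense. Extracting genuine oscillation control on a full relatively open set $W$ from control on the dense subset $C_k$ is exactly what the twofold (really threefold) application of~(\ref{star}) through buddies accomplishes, and aligning the diameters and the constants $1/k$, $\delta(y')$, $\varepsilon/4$ so that every invocation of~(\ref{star}) is licensed is the delicate bookkeeping. By contrast, the uniformity over $f\in\F$ costs nothing extra: the gauge $\delta$ and the radii $r_\alpha$ are chosen once for the whole family, so the arguments run verbatim as in the single-function case.
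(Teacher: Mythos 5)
Your proof is correct, but there is nothing in the paper to compare it against line by line: the paper does not prove Theorem~\ref{thm:lecomte} at all, it imports it from Lecomte \cite[Prop.~32]{Le} and uses it as a black box (only in the contrapositive form ``not equi-Baire 1 implies some perfect $P$ has no point of equicontinuity'', in the proofs of Theorems~\ref{pierwsza} and~\ref{equi}). So what you have written is a self-contained substitute for the citation rather than a variant of an argument in the paper. Both halves check out. In the direction ``equi-Baire 1 $\Rightarrow$ (PECP)'', the bookkeeping licenses every use of~(\ref{star}): choosing $\diam W<\tfrac{1}{4k}$ and buddies with $d_X(y,y')<\min\{\delta(y),\tfrac{1}{4k}\}$ gives $d_X(y,y')<\min\{\delta(y),\delta(y')\}$ (as $\delta(y')\ge 1/k$) and $d_X(y',z')<\tfrac{3}{4k}<\tfrac1k\le\min\{\delta(y'),\delta(z')\}$, so the chain yields $\osc_P(x_0)\le 3\varepsilon/4<\varepsilon$; combined with upper semicontinuity of $\osc_P$ and the fact that a closed $P$ in a Polish space is Polish (hence Baire), this produces a point of equicontinuity. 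In the converse direction, the transfinite peeling is sound: $P_{\alpha+1}=P_\alpha\setminus B(x_\alpha,r_\alpha)$ is closed, the strictly decreasing chain is countable by second countability, and the key step --- that $d_X(x_0,x_1)<\min\{\delta_\varepsilon(x_0),\delta_\varepsilon(x_1)\}$ forces $\alpha(x_0)=\alpha(x_1)$, putting both points in one slab whose oscillation is $<\varepsilon$ via the hub $x_\alpha$ --- is exactly right; this is the family version of the Lee--Tang--Zhao argument~\cite{LTZ}. It is worth noticing that your oscillation machinery in the first half is the same device the paper deploys for a different purpose inside the proof of Theorem~\ref{pierwsza} (the closed sets $P_n=\{x\in P\colon \omega_{\F\restriction P}(x)\ge 1/n\}$ plus the Baire category theorem); your density argument is in effect a quantitative refinement of that step. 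What your proof buys is self-containedness, and it makes explicit the point the paper only asserts implicitly: uniformity over $f\in\F$ costs nothing, since the gauge $\delta$ and the radii $r_\alpha$ are chosen once for the whole family.
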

Let us mention that, since $X$ is a Polish space, 
a non-empty closed set in conditions (PCP) and (PECP) can be equivalently replaced
by a perfect set
(that is, a non-empty closed set without isolated points).

Note that the definition of equi-Baire 1 family of functions
was rediscovered later by A.~Alikhani-Koopaei in~\cite{A}.
The definition of families with (PECP) was used by 
E.~Glasner and M.~Megrelishvili in the context of dynamical systems
in \cite{GM} (under the name ``barely continuous family'').

Through the paper we assume the Axiom of Choice AC.

\section{Game characterizations of Baire 1 functions}

Recall the game defined by Kiss \cite{Ki}. 
Let $X$ and $Y$ be Polish spaces.
Let $f\colon X\to Y$ be an arbitrary function. At the $n$th step of the game $G_f$,
Player~I plays $x_n$, then Player~II plays $y_n$,
$$\begin{array}{lllllllll}
\textrm{Player I}  & x_0 &     & x_1 &     & x_2 &     & \cdots &        \\
\textrm{Player II} &     & y_0 &     & y_1 &     & y_2 &        & \cdots
\end{array}$$
with the rules that for each $n\in\mathbb{N}$:
\begin{itemize}
\item $x_n\in X$ and $d_X(x_n,x_{n+1})\leq 2^{-n}$;
\item $y_n\in Y$.
\end{itemize}
Since $X$ is complete, $x_n\to x$ for some $x\in X$. Player II wins if and only if $\la y_n\ra_{n\in\mathbb{N}}$ is convergent
and $y_n\to f(x)$.
Recall the main result of Kiss:
\begin{theorem}[\hbox{\cite[Theorem 1]{Ki}}]\label{thm:kiss}
The game $G_f$ is determined, and
\begin{itemize}
\item Player I has a winning strategy in $G_f$ if and only if $f$ is not of Baire class~1.
\item Player II has a winning strategy in $G_f$ if and only if $f$ is of Baire class~1.
\end{itemize}
\end{theorem}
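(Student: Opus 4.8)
The plan is to reduce the theorem to the two implications: (A) if $f$ is of Baire class $1$ then Player~II has a winning strategy, and (B) if $f$ is not of Baire class $1$ then Player~I has a winning strategy. Since no single run of $G_f$ is won by both players, (A) and (B) yield the whole statement: determinacy holds because every $f$ is covered by one of them, and each equivalence follows by contraposition (for instance, if Player~I had a winning strategy while $f$ were Baire~$1$, then (A) would give Player~II a winning strategy too, which is absurd). So it remains to construct the two strategies, using the $\varepsilon$-gauge characterization for (A) and (PCP) for (B).

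For (A), fix for each $k$ an $\varepsilon$-gauge $\delta_k$ for $\varepsilon=2^{-k}$ and replace it by $\min\{\delta_0,\dots,\delta_k\}$, so that $\delta_0\ge\delta_1\ge\cdots$ pointwise. The rule $d_X(x_n,x_{n+1})\le 2^{-n}$ forces $d_X(x_n,x)\le 2^{-n+1}$, so the limit $x$ always lies in the shrinking ball $B_n:=\cl B(x_n,2^{-n+1})$. Player~II's move at stage $n$ is: let $k(n)$ be the largest $k\le n$ such that some $w\in B_n$ has $\delta_k(w)>3\cdot 2^{-n+1}$, pick such a witness $w_n$ for $k=k(n)$, and play $y_n:=f(w_n)$. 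The gauge condition shows that any two points $u,u'\in B_n$ whose $\delta_k$-value exceeds $3\cdot 2^{-n+1}$ satisfy $d_X(u,u')\le\diam B_n<\min\{\delta_k(u),\delta_k(u')\}$, hence $d_Y(f(u),f(u'))<2^{-k}$; that is, $f$ is $2^{-k}$-constant on the witnesses at level $k$.

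The main obstacle in (A) is the asymmetry of (\ref{star}): comparing $f(w_n)$ with $f(x)$ requires $d_X(w_n,x)$ to be below the gauge at \emph{both} points, and the gauge at $x$ may be minuscule at the high level actually used. I would overcome this with the monotonicity of the $\delta_k$. Fix any level $j$. For all large $n$ the chosen level $k(n)$ is $\ge j$ (since $x$ itself witnesses level $j$ once $2^{-n+1}<\delta_j(x)/3$), so by monotonicity $\delta_j(w_n)\ge\delta_{k(n)}(w_n)>3\cdot 2^{-n+1}$; and $\delta_j(x)>3\cdot 2^{-n+1}$ as well. Thus $w_n$ and $x$ are simultaneously level-$j$ witnesses in $B_n$, whence $d_Y(f(w_n),f(x))<2^{-j}$. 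Therefore $\limsup_n d_Y(y_n,f(x))\le 2^{-j}$ for every $j$, i.e. $y_n\to f(x)$: Player~II wins. The crux is that a witness picked for a fine level automatically carries a large gauge at the fixed coarse level $j$, where the limit is comfortably a witness too.

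For (B), the failure of (PCP) gives a non-empty closed $P$ on which $f$ has no point of continuity; a Baire-category argument on $P$ then produces a non-empty closed $Q\subseteq X$ and $\eta>0$ such that every $q\in Q$ has, arbitrarily close within $Q$, a point $q'$ with $d_Y(f(q),f(q'))\ge\eta$. Player~I keeps all her moves in $Q$ and proceeds in \emph{epochs}. In an epoch she sits at a point $s_t\in Q$ (repeating it, which is legal) until Player~II first responds within $\eta/3$ of $f(s_t)$; then she steps to a nearby $s_{t+1}\in Q$ with $d_Y(f(s_{t+1}),f(s_t))\ge\eta$, choosing the step shorter than the current budget $2^{-n}$, so that the moves are summable and $x_n\to x\in Q$. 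This is the decisive idea, because it never needs the value $f(x)$: if some epoch never ends, then $x=s_t$ is eventually constant and $y_n$ stays $\ge\eta/3$ from $f(x)=f(s_t)$; if infinitely many epochs end, the acceptance responses $y_{n_t}$ satisfy $d_Y(y_{n_t},y_{n_{t+1}})\ge\eta-2\cdot\eta/3=\eta/3$, so $(y_n)$ is not Cauchy. In either case $(y_n)$ fails to converge to $f(x)$, so Player~I wins. I expect (A) to be the harder direction, with the gauge asymmetry its only real difficulty; (B) is mostly bookkeeping once the epoch mechanism is in place.
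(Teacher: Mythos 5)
Your proposal is correct. For the implication the paper actually proves in detail --- that a Baire~1 function gives Player~II a winning strategy --- your argument is essentially the paper's own: your pointwise monotonization of the gauges is Lemma~\ref{sublemma}, and your strategy of picking, inside the shrinking closed ball $B_n=\overline{B}(x_n,2^{-n+1})$ containing the limit, a witness $w_n$ at the largest available level $k(n)\le n$ and then comparing $f(w_n)$ with $f(x)$ at a fixed coarse level $j$ via monotonicity is exactly the strategy $\$'_\Delta$ of Lemma~\ref{le:G-strategy-II}; your witness condition $\delta_k(w)>3\cdot 2^{-n+1}$ is just a slightly stronger form of the paper's condition $K_n\subseteq B\left(x',\delta_{\frac{1}{m}}(x')\right)$. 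Where you genuinely differ is the other implication: the paper does not reprove that a non-Baire-1 $f$ yields a winning strategy for Player~I in $G_f$ --- it imports this from Kiss --- whereas you give a self-contained epoch argument. That argument is sound: staying at $s_t$ until Player~II answers within $\eta/3$ of $f(s_t)$, then jumping to a nearby point of $Q$ whose value differs by at least $\eta$, forces $\langle y_n\rangle$ either to remain $\eta/3$-far from $f(x)$ (a never-ending epoch) or to fail the Cauchy condition (infinitely many epochs); this is the same mechanism the paper itself deploys for Player~I in Theorems~\ref{pierwsza} and~\ref{equi}, so your proof is a natural completion rather than a departure in method. Only trivial patches are needed: specify what Player~II plays at the finitely many initial stages where no level $k\le n$ has a witness (e.g.\ $f(x_n)$, as in the paper's clause $M_n=-\infty$), and spell out the routine Baire-category step that produces $Q$ and $\eta$ from the closed sets on which the oscillation of $f\restriction P$ is at least $1/n$ (the paper asserts the analogous fact in the proof of Theorem~\ref{druga} without proof, so this matches its level of detail).
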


The longest part of the original proof is the implication~(1) from Lemma~\ref{lem}:
``if $f$ is of Baire class 1 then Player II has a winning strategy''.
We show that it can be significantly shortened by the use of $\varepsilon$-$\delta$ characterization 
of Baire 1 functions.
We describe it in Lemma~\ref{le:G-strategy-II} which will be preceded by the following fact.

\begin{lemma}\label{sublemma}
A function	$f\colon X\to Y$ is Baire 1 if and only if it possesses a family of gauges $\{\delta_\varepsilon\colon \varepsilon>0\}$ 
such that for every $x\in X$ the map $\varepsilon\mapsto\delta_\varepsilon(x)$ is non-decreasing.
\end{lemma}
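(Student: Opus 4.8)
The plan is to treat the two implications quite asymmetrically. The backward implication is immediate: if $f$ admits a family $\{\delta_\varepsilon : \varepsilon > 0\}$ of gauges, then in particular each $\delta_\varepsilon$ is an $\varepsilon$-gauge, so condition (\ref{star}) holds for every $\varepsilon$, and $f$ is Baire 1 by the $\varepsilon$-$\delta$ characterization of \cite{LTZ} recalled above. No use of monotonicity is needed here.

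For the forward implication I would start from an arbitrary family of gauges $\{g_\varepsilon : \varepsilon > 0\}$, which exists because $f$ is Baire 1, and repair it to make it monotone. Two elementary observations drive the argument. First, \emph{enlarging $\varepsilon$ is free}: if $g$ is an $\varepsilon$-gauge and $\varepsilon' \ge \varepsilon$, then $g$ is an $\varepsilon'$-gauge, since the conclusion $d_Y(f(x_0),f(x_1)) < \varepsilon$ already forces $d_Y(f(x_0),f(x_1)) < \varepsilon'$. Second, \emph{shrinking a gauge is free}: if $g$ is an $\varepsilon$-gauge and $0 < g' \le g$ pointwise, then $g'$ is again an $\varepsilon$-gauge, because $\min\{g'(x_0), g'(x_1)\} \le \min\{g(x_0), g(x_1)\}$ only tightens the hypothesis of (\ref{star}).

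With these in hand I would pass to the dyadic scale. Put $h_n := \min\{g_{2^{-0}}, g_{2^{-1}}, \dots, g_{2^{-n}}\}$ for $n \in \bN$. Since $h_n \le g_{2^{-n}}$ and $g_{2^{-n}}$ is a $2^{-n}$-gauge, the shrinking observation shows $h_n$ is again a $2^{-n}$-gauge; being a minimum of finitely many strictly positive functions it is itself strictly positive, and clearly $h_0 \ge h_1 \ge \cdots$. I would then set, for arbitrary $\varepsilon > 0$, $\delta_\varepsilon := h_{n(\varepsilon)}$, where $n(\varepsilon)$ is the least $n \in \bN$ with $2^{-n} \le \varepsilon$. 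Then $\delta_\varepsilon$ is a $2^{-n(\varepsilon)}$-gauge with $2^{-n(\varepsilon)} \le \varepsilon$, so the enlarging observation makes it an $\varepsilon$-gauge; it is positive; and since $\varepsilon \mapsto n(\varepsilon)$ is non-increasing while $n \mapsto h_n$ is non-increasing, the map $\varepsilon \mapsto \delta_\varepsilon(x) = h_{n(\varepsilon)}(x)$ is non-decreasing for every fixed $x$, exactly as required.

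The one point that needs care — and the reason for discretizing rather than writing the more obvious $\delta_\varepsilon(x) := \inf_{\varepsilon' \ge \varepsilon} g_{\varepsilon'}(x)$ — is positivity. Taking an infimum over all larger $\varepsilon'$ would preserve both monotonicity and, via the shrinking observation, the gauge property, but the values of an arbitrary unrepaired family $g_{\varepsilon'}$ need not stay bounded away from $0$ as $\varepsilon'$ grows, so the infimum could collapse to $0$ at some point $x$. Restricting each minimum to the finite dyadic initial segment $\{2^{-0}, \dots, 2^{-n}\}$ is precisely what keeps $h_n$ strictly positive while still yielding a sequence that is non-increasing in $n$, and the step-function reindexing $\varepsilon \mapsto n(\varepsilon)$ then transfers this to a genuinely monotone family indexed by all $\varepsilon > 0$. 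I expect this positivity bookkeeping to be the only genuine obstacle; everything else is the two one-line monotonicity remarks above.
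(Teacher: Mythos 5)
Your proof is correct and takes essentially the same route as the paper: the paper likewise repairs an arbitrary family of gauges by taking finite minima along a discrete scale (harmonic, $\delta'_\varepsilon := \min\{\delta_{1/n}(x) : n \le N_\varepsilon\}$ with $1/N_\varepsilon \le \varepsilon$, rather than your dyadic $2^{-n}$), getting positivity from the finiteness of the minimum, monotonicity from the nesting of the index sets, and the gauge property from exactly your two observations that shrinking a gauge and enlarging $\varepsilon$ are both free. The differences (dyadic versus harmonic scale, and your explicit isolation of the two one-line remarks) are purely cosmetic.
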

\begin{proof}
Only the implication ``$\Rightarrow$'' has to be proved. Assume that $f$ is Baire 1 and $\{\delta_\varepsilon\colon\varepsilon>0\}$ is a family of gauges for $f$. 
For every $\varepsilon>0$ fix $N_\varepsilon\in\mathbb{N}$ such that $N_\varepsilon=1$ if $\varepsilon\ge 1$ 
and $\frac{1}{N_\varepsilon}\le \varepsilon<\frac{1}{N_\varepsilon-1}$ for $\varepsilon<1$.
For $x\in X$ define 
$$\delta'_\varepsilon(x):= 
	\min\left\{ \delta_{\frac{1}{n}}(x) \colon n\le N_\varepsilon \right\}.$$
Clearly, if  $\varepsilon\le\varepsilon_1$  then $N_\varepsilon\ge N_{\varepsilon_1}$, hence for any $x\in X$ we have $\delta'_\varepsilon(x)\le\delta'_{\varepsilon_1}(x)$. We will show that $\{\delta'_\varepsilon\colon\varepsilon>0\}$ is a family of gauges for $f$.
Indeed, assume that $d_X(x_0,x_1)<\min(\delta'_\varepsilon(x_0),\delta'_\varepsilon(x_1))$ for some $\varepsilon>0$ and  $x_0,x_1\in X$.
Then $\frac{1}{N_\varepsilon}\le \varepsilon$ (by definition of $N_\varepsilon$)
and  $\delta'_\varepsilon(x_i)\le \delta_{\frac{1}{N_\varepsilon}}(x_i)$ for $i=0,1$, so
$$d_X(x_0,x_1)<\min(\delta_{\frac{1}{N_\varepsilon}}(x_0),\delta_{\frac{1}{N_\varepsilon}}(x_1)).$$ 
Hence $$d_Y(f(x_0),f(x_1))<\frac{1}{N_\varepsilon}\le\varepsilon.$$
\end{proof}

\begin{lemma}\label{le:G-strategy-II}
Let $\Delta:=\{\delta_\varepsilon\colon\varepsilon>0\}$  be a family of positive functions from $X$ into $\bR$ such that, 
for every $x\in X$, the map $\varepsilon\mapsto\delta_\varepsilon(x)$ is non-decreasing.
Then  there is a function $\$'_\Delta\colon X^{<\omega}\to X$ such that
for every sequence $\la x_n\ra$ with $d_X(x_n,x_{n+1})\leq 2^{-n}$ for each $n$, 
and for every $\varepsilon>0$ there exists $N_\varepsilon\in\mathbb{N}$ with the property
\begin{itemize}
	\item 
	for every Baire 1 function $f\colon X\to Y$, if $\Delta$ is a family of $\varepsilon$-gauges for $f$, 
	then 
	$$\forall_{n>N_\varepsilon}\; d_Y\left(f\left(\$'_{\Delta}\left(x_0,x_1,\ldots,x_n\right)\right), f\left(\lim_{n\to\infty}x_n\right)\right)<\varepsilon.$$
\end{itemize}
	In particular, $$\lim_{n\to\infty}f\left(\$'_{\Delta}\left(x_0,x_1,\ldots,x_n\right)\right)=f(\lim_{n\to\infty}x_n),$$ 
	so the function $\$:=f\circ\$'_\Delta$ is a winning strategy for Player II in the game $G_f$.
	\end{lemma}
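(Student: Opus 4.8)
The plan is to construct $\$'_\Delta$ so that, on a history $x_0,\dots,x_n$, it returns a point $p_n$ sitting in a tiny ball around $x_n$ at a place where the gauge is large, and then to read off $d_Y(f(p_n),f(x))<\varepsilon$ from the gauge inequality (\ref{star}). First I would fix the geometry. Writing $x:=\lim_n x_n$, the single rule $d_X(x_n,x_{n+1})\le 2^{-n}$ gives $d_X(x_n,x)\le\sum_{j\ge n}2^{-j}=2^{\,1-n}=:r_n$, so the limit $x$ lies in the closed ball $\overline{B}(x_n,r_n):=\{p\in X:d_X(p,x_n)\le r_n\}$, and every $p\in\overline{B}(x_n,r_n)$ satisfies $d_X(p,x)\le 2r_n$. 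This is the only geometric fact I need.

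The hard point — and the reason Player II cannot simply echo $f(x_n)$ — is that the gauge values $\delta_\varepsilon(x_n)$ along the sequence may remain far below $d_X(x_n,x)$ even though $\delta_\varepsilon(x)>0$; then no sequence point ``sees'' the limit through (\ref{star}), and $f(x_n)$ need not converge to $f(x)$ at all (this already fails when $f$ is the indicator of a one-point set and the $x_n$ converge to that point while avoiding it). The cure is to let $\$'_\Delta$ \emph{search the ball} $\overline{B}(x_n,r_n)$, which always contains $x$, for a point whose gauge beats the radius, and to do so uniformly in $\varepsilon$ by a diagonal over the levels $1/m$. Concretely, using only the last entry $x_n$ and the length $n$ (and the functions $\delta_{1/m}$ from $\Delta$, never $f$ nor a single $\varepsilon$), I would set
\[ m_n:=\max\bigl\{\,m\le n:\ \exists\,p\in\overline{B}(x_n,r_n)\ \text{such that}\ \delta_{1/m}(p)>2r_n\,\bigr\}, \]
with the convention $m_n=1,\ p_n=x_n$ if this set is empty, and otherwise pick (by the axiom of choice) a witness $p_n\in\overline{B}(x_n,r_n)$ with $\delta_{1/m_n}(p_n)>2r_n$; define $\$'_\Delta(x_0,\dots,x_n):=p_n$.

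Two facts then drive the verification, and I expect the management of uniformity to be the main obstacle. First, $m_n\to\infty$: for any fixed $M$ the limit point $x$ witnesses that $M$ belongs to the defining set as soon as $n\ge M$ and $2r_n<\delta_{1/M}(x)$, and the latter holds for all large $n$ since $\delta_{1/M}(x)>0$ is a fixed positive number; hence $m_n\ge M$ eventually. Second, the assumed monotonicity of $\varepsilon\mapsto\delta_\varepsilon(p)$ transfers a bound at level $1/m_n$ to any coarser level. Given a target $\varepsilon>0$, I would pick $N_\varepsilon$ so large that for all $n>N_\varepsilon$ one has both $1/m_n\le\varepsilon$ and $2r_n<\delta_\varepsilon(x)$; the delicate point here is that $N_\varepsilon$ must be chosen \emph{before} $f$, and it is, because both conditions involve only $\langle x_n\rangle$, the family $\Delta$, and the value $\delta_\varepsilon(x)$.

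Finally I would verify the bulleted property. Let $f$ be any Baire $1$ function having $\Delta$ as gauges and let $n>N_\varepsilon$. Then $d_X(p_n,x)\le 2r_n<\delta_\varepsilon(x)$, and also $d_X(p_n,x)\le 2r_n<\delta_{1/m_n}(p_n)\le\delta_\varepsilon(p_n)$, the last inequality by monotonicity since $1/m_n\le\varepsilon$. Hence $d_X(p_n,x)<\min\{\delta_\varepsilon(p_n),\delta_\varepsilon(x)\}$, so (\ref{star}) for the $\varepsilon$-gauge $\delta_\varepsilon$ yields $d_Y\bigl(f(\$'(x_0,\dots,x_n)),f(x)\bigr)<\varepsilon$. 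As $\varepsilon>0$ was arbitrary, $\lim_n f\bigl(\$'_\Delta(x_0,\dots,x_n)\bigr)=f(\lim_n x_n)$, so the coordinates played by $\$:=f\circ\$'_\Delta$ converge to $f(\lim_n x_n)$ and $\$$ is winning for Player II. I note that no Borel-definability of $\$'_\Delta$ is required, since the lemma only asserts existence, which is exactly what lets the ball-search selection be a plain choice.
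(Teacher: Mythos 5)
Your proof is correct and takes essentially the same approach as the paper's: the paper likewise searches the closed ball $K_n:=\overline{B}(x_n,2^{-n+1})$ (which contains the limit $x$) for a point $x'_n$ whose gauge at the largest achievable level $1/m$ covers the whole ball, uses $x$ itself as the witness forcing that maximal level to tend to infinity, and then applies the gauge inequality to the pair $x'_n, x$ exactly as you do. The differences are cosmetic: the paper's selection condition is $K_n\subseteq B(x',\delta_{1/m}(x'))$ rather than your slightly stronger $\delta_{1/m}(p)>2r_n$, and it fixes an integer $M$ with $1/M<\varepsilon$ where you work with $\varepsilon$ directly.
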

	\begin{proof}
Fix $\varepsilon>0$ and a sequence $\la x_n\ra\subseteq X$ such that $d_X(x_n,x_{n+1})\leq 2^{-n}$ for each $n$. 
We may assume that, for each $n\in\mathbb{N}$, Player I plays $x_n$ in the $n$th move of the game $G_f$. For each $n\in\mathbb{N}$ let 
$K_n:=\overline{B}(x_n,2^{-n+1})$ be the closed ball around $x_n$. 
Note that this is the smallest closed ball around $x_n$ 
which ensures that $x:=\lim_{j\to\infty}x_j\in K_n$. 
Denote by $M_n$ the greatest index $m<n$ for which there exists a point
$x'\in K_n$ such that $K_n\subseteq B\left(x',\delta_{\frac{1}{m}}\left({x'}\right)\right)$; 
then pick one of them and call it $x'_n$. If such an index $m$ does not exist, put $M_{n}:=-\infty$.
Define
$$\$'_{\Delta}(x_0,\ldots,x_n):=\left\{\begin{array}{ll}
{x'_{n}}& \textup{if\ } M_n>-\infty, \\
x_n & \textup{otherwise}.
\end{array}\right.$$

It is enough to show that $\lim_{n\to\infty} f(\$'_\Delta(x_0,\ldots,x_n)) = f(x)$
for each Baire 1 function $f\colon X\to Y$ with the family of $\varepsilon$-gauges equal to $\Delta$.
Fix $\varepsilon>0$ and find a positive integer $M$ such that $1/M<\varepsilon$. 
There exists $N\in\mathbb{N}$ such that
for each $n\geq N$,
\begin{equation}\label{le:G-strategy-II:1}
x\in K_n\subseteq  B\left(x,\delta_{\frac{1}{M}}\left(x\right)\right).
\end{equation}
Since $x\in K_n$ for all $n$, it follows that $M_n\ge M> -\infty$ for all $n > \max\{N,M\}$.
Then $\delta_{\frac{1}{M_n}}(x')\le\delta_{\frac{1}{M}}(x')$ for all $x'\in X$,
hence
\begin{equation}\label{le:G-strategy-II:2}
\$'_\Delta(x_0,\ldots,x_n)=x_n' \in K_n\subseteq   {B\left(x'_n,\delta_{\frac{1}{M_n}}(x'_n)\right)\subseteq }    B\left(x'_n,\delta_{\frac{1}{M}}(x'_n)\right).
\end{equation}
From (\ref{le:G-strategy-II:1}) and (\ref{le:G-strategy-II:2}) we get
$$x,x'_n\in K_n\subseteq B\left(x,\delta_{\frac{1}{M}}\left(x\right)\right) \cap B\left(x'_n,\delta_{\frac{1}{M}}(x'_n)\right),$$
and so
$$d_X(x'_{n},x)<\min\left\{\delta_{\frac{1}{M}}\left(x'_{n}\right),\delta_{\frac{1}{M}}\left(x\right)\right\}.$$
To finish the proof it is enough to observe that, since $\delta_{\frac{1}{M}}$ is an $\frac{1}{M}$-gauge for $f$, so
$$d_Y\left(f\left(\$'_\Delta\left(x_0,\ldots,x_n\right)\right),f\left(x\right)\right)<\frac{1}{M}<\varepsilon.$$
\end{proof}

\begin{remark}
In the original proof, Kiss noted that ``the idea of the proof is to pick $y_n$ as the image of a point
in $\overline{B}(x_n,2^{-n+1})$ at which $f$ behaves {\it badly}''. In fact, we are able to shorten his argument,
since the family of $\varepsilon$-gauges encodes the ``bad'' behaviour of $f$. 
\end{remark}

\begin{remark}
From Lemma~\ref{le:G-strategy-II} (see also \cite[Theorem 1]{Ki}) it follows that Player~II has a winning strategy 
	in the game $G_f$ if and only if he
	has a winning strategy of the form $\$(x_0,x_1,\ldots,x_n)=f(\$'(x_0,x_1,\ldots,x_n))$.
This is a motivation for introducing the games $G'_f$ and $G''_f$.
\end{remark}

\subsection{The games $G'_f$ and $G''_f$}

Let $X$ and $Y$ be Polish spaces, $f\colon X\to Y$ be an arbitrary function. At the $n$th step of the game $G'_f$,
Player~I plays $x_n$, then Player~II plays $x'_n$,
$$\begin{array}{lllllllll}
\textrm{Player I}  & x_0 &     & x_1 &     & x_2 &     & \cdots &        \\
\textrm{Player II} &     & x'_0 &     & x'_1 &     & x'_2 &        & \cdots
\end{array}$$
with the rules that for each $n\in\mathbb{N}$:
\begin{itemize}
\item $x_n\in X$ and $d_X(x_n,x_{n+1})\leq 2^{-n}$;
\item $x'_n\in X$.
\end{itemize}
Since $X$ is complete, $x_n\to x$ for some $x\in X$. Player II wins if  
$\langle f(x'_n)\rangle $ is convergent to $f(x)$.
Otherwise, Player I wins.

As a consequence of Lemma~\ref{le:G-strategy-II}, we obtain the following result.
\begin{theorem}
The game $G'_f$ is determined, and
\begin{itemize}\label{thm:g-prim}
\item Player I has a winning strategy in $G'_f$ if and only if $f$ is not of Baire class~1.
\item Player II has a winning strategy in $G'_f$ if and only if $f$ is of Baire class~1.
\end{itemize}
\end{theorem}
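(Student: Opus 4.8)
The plan is to deduce everything from Kiss's theorem (Theorem~\ref{thm:kiss}) by transferring winning strategies back and forth between $G_f$ and $G'_f$. The two games are identical except that in $G'_f$ Player~II announces a point $x'_n\in X$ in place of a value $y_n\in Y$, and her payoff condition $f(x'_n)\to f(x)$ is exactly the payoff condition of $G_f$ after the substitution $y_n=f(x'_n)$. This substitution drives both directions of the argument.

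First I would treat Player~II. In one direction, if $\$'$ is a winning strategy for Player~II in $G'_f$ (depending on $x_0,\dots,x_n$), then $\$:=f\circ\$'$ is a legal strategy in $G_f$, and the substitution $y_n=f(x'_n)$ shows it is winning. The converse is the only genuinely delicate point: a winning strategy for Player~II in $G_f$ may produce arbitrary values in $Y$, whereas in $G'_f$ her outputs are forced to lie in the range of $f$, so it is not automatic that she can still win. Here I would appeal to the observation recorded just before the theorem, which follows from Lemma~\ref{le:G-strategy-II} together with Lemma~\ref{sublemma}: whenever Player~II wins $G_f$ she in fact has a winning strategy of the special factored form $\$=f\circ\$'$, and then $\$'$ itself is a winning strategy for II in $G'_f$. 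Combining the two directions with Theorem~\ref{thm:kiss} yields the chain ``Player~II wins $G'_f$ $\Leftrightarrow$ Player~II wins $G_f$ $\Leftrightarrow$ $f$ is of Baire class~$1$''.

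Next I would treat Player~I by transferring strategies in the opposite direction. Given a winning strategy $\tau$ for Player~I in $G_f$, I would define a strategy for I in $G'_f$ by responding to a partial play $x'_0,\dots,x'_{n-1}$ exactly as $\tau$ responds to $f(x'_0),\dots,f(x'_{n-1})$. Since the admissibility constraint $d_X(x_n,x_{n+1})\le 2^{-n}$ restricts only Player~I's own moves and is inherited from the corresponding $G_f$-play, this rule is legal; and any resulting play of $G'_f$ mirrors a $\tau$-play of $G_f$ with $y_n=f(x'_n)$, so $f(x'_n)\not\to f(x)$ and I wins $G'_f$. Together with Theorem~\ref{thm:kiss} this gives ``$f$ not of Baire class~$1$ $\Rightarrow$ Player~I wins $G_f$ $\Rightarrow$ Player~I wins $G'_f$''.

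Finally, determinacy and the remaining implication follow formally. No game can admit winning strategies for both players, so if Player~I wins $G'_f$ then Player~II does not, whence by the Player~II equivalence $f$ is not of Baire class~$1$; this supplies the converse on the Player~I side. The dichotomy then closes: if $f$ is of Baire class~$1$ then II wins (and I cannot), and if $f$ is not of Baire class~$1$ then I wins (and II cannot), so exactly one player has a winning strategy and $G'_f$ is determined. The single step demanding real care is the converse for Player~II, where one must verify that confining her outputs to the range of $f$ costs her nothing --- precisely the content of the factored-strategy observation.
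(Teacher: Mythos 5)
Your proposal is correct and follows essentially the same route as the paper's proof: the Player~II direction rests on the factored strategy $\$'$ supplied by Lemma~\ref{le:G-strategy-II} (together with Lemma~\ref{sublemma}), the Player~I direction transfers the winning strategy for I in $G_f$ given by Theorem~\ref{thm:kiss} over to $G'_f$, and determinacy plus the two converse implications follow by the standard formal argument. Your explicit translation of Player~I's strategy (responding to $f(x'_0),\dots,f(x'_{n-1})$ as it would to $y_0,\dots,y_{n-1}$) merely spells out what the paper compresses into the remark that I's strategy in $G_f$ ``is also'' a strategy in $G'_f$.
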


\begin{proof}
We will apply Lemma~\ref{lem}, hence it is enough to show that:
\begin{enumerate}[(i)]
\item if $f$ is Baire class 1 then Player II has a winning strategy, and\label{thm:g-prim:i}
\item if $f$ is not of Baire class 1 then Player I has a winning strategy.\label{thm:g-prim:ii}
\end{enumerate}
To prove (\ref{thm:g-prim:i}) observe that the function $\$'_\Delta$ from Lemma~\ref{le:G-strategy-II}, 
for $\Delta$ being a family of gauges of $f$, is a winning strategy for Player II.

To see~(\ref{thm:g-prim:ii}), observe that the winning strategy for Player I in $G_f$ is also
a winning strategy for him in $G'_f$. Thus~(\ref{thm:g-prim:ii}) follows from Theorem~\ref{thm:kiss}.
\end{proof}

Now, we  propose a further modification of the game  to obtain a similar effect. This time, we will define a point-open game $G''_f$.
Let $X$ and $Y$ be Polish spaces, $f\colon X\to Y$ be an arbitrary function. 
At the first step of the game $G''_f$, Player I plays $x_0\in X$
and then Player II plays an open set $U_0\ni x_0$.
At the $n$th step of the game $G''_f$ ($n>0$),
Player I plays $x_n\in U_{n-1}$, then Player II plays an open set $U_n\ni x_n$:
$$\begin{array}{lllllllll}
\textrm{Player I}  & x_0 &      & x_1 &      & x_2 &      & \cdots &        \\
\textrm{Player II} &     & U_0 &     & U_1 &     & U_2 &        & \cdots
\end{array}$$
with the rules that for each $n\in\mathbb{N}$:
\begin{itemize}
\item $x_0\in X$, and $x_n\in U_{n-1}$ for $n>0$;
\item $U_n\ni x_n$.
\end{itemize}
If $\la x_n\ra $ is convergent and $\lim_{n\to\infty} f(x_n)=f(\lim_{n\to\infty}x_n)$ then Player II wins.
Otherwise, Player I wins.

\begin{theorem}\label{druga}
The game $G''_f$ is determined, and
\begin{itemize}
\item Player I has a winning strategy in $G''_f$ if and only if $f$ is not of Baire class~1.
\item Player II has a winning strategy in $G''_f$ if and only if $f$ is of Baire class~1.
\end{itemize}
\end{theorem}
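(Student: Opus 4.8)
The plan is to prove the two implications separately — Player II wins when $f$ is Baire 1, Player I wins when $f$ is not — after which determinacy and both equivalences follow by the standard argument: in each of the two mutually exclusive cases one player has a winning strategy, and since at most one player can have one, the two ``iff'' statements are immediate.

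For the implication ``$f$ Baire 1 $\Rightarrow$ Player II wins'' I would first invoke Lemma~\ref{sublemma} to fix a family of gauges $\Delta=\{\delta_\varepsilon:\varepsilon>0\}$ with $\varepsilon\mapsto\delta_\varepsilon(x)$ non-decreasing for every $x$, and fix any sequence $\varepsilon_n\downarrow 0$. Player~II's strategy is to keep the neighbourhoods nested and inside shrinking gauge balls: after Player~I plays $x_n\in U_{n-1}$, let Player~II respond with
\[ U_n := U_{n-1}\cap B\left(x_n,\ \min\{2^{-n},\tfrac{1}{2}\delta_{\varepsilon_n}(x_n)\}\right), \]
where $U_{-1}:=X$. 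This is legal since $x_n\in U_{n-1}$ and $x_n$ lies in the ball. The factor $2^{-n}$ forces $\la x_n\ra$ to be Cauchy, hence convergent to some $x$, while the nesting guarantees $x_m\in U_n$ for all $m>n$, so $x\in\overline{U_n}$ and therefore $d_X(x_n,x)\le\tfrac{1}{2}\delta_{\varepsilon_n}(x_n)<\delta_{\varepsilon_n}(x_n)$.

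The crux is then to read off $f(x_n)\to f(x)$ from the gauge condition~(\ref{star}); this is where the point-open format does the real work, since Player~II has trapped the limit $x$ inside the gauge ball of every $x_n$. Fix a target $\varepsilon>0$. For $n$ large we have $\varepsilon_n\le\varepsilon$, so monotonicity gives $d_X(x_n,x)<\delta_{\varepsilon_n}(x_n)\le\delta_\varepsilon(x_n)$; and since $x_n\to x$ and $\delta_\varepsilon(x)>0$, for $n$ large we also have $d_X(x_n,x)<\delta_\varepsilon(x)$. Hence eventually $d_X(x_n,x)<\min\{\delta_\varepsilon(x_n),\delta_\varepsilon(x)\}$, and~(\ref{star}) yields $d_Y(f(x_n),f(x))<\varepsilon$. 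As $\varepsilon$ was arbitrary, $f(x_n)\to f(x)$ and Player~II wins. I regard dealing with the built-in symmetry of~(\ref{star}) — needing $d_X(x_n,x)$ below the gauge at \emph{both} $x_n$ and $x$ — as the main obstacle, and the nesting-plus-closure device is exactly what overcomes it; note that, unlike in Lemma~\ref{le:G-strategy-II}, no auxiliary witness points are needed.

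For the implication ``$f$ not Baire 1 $\Rightarrow$ Player I wins'' I would use the failure of (PCP): there is a non-empty closed $P\subseteq X$ on which $f\restriction P$ has no point of continuity. A Baire-category argument inside $P$ produces $\varepsilon_0>0$ and, after shrinking $P$, a dense set of points at which $\osc(f\restriction P,\cdot)\ge\varepsilon_0$; consequently every non-empty relatively open $V\subseteq P$ contains $y,z$ with $d_Y(f(y),f(z))>\varepsilon_0/2$. Player~I's strategy is to stay inside $P$: start with any $x_0\in P$, and at step $n$, given $U_{n-1}\ni x_{n-1}$, apply the oscillation property to $V=U_{n-1}\cap P$ to obtain $y,z\in V$ with $d_Y(f(y),f(z))>\varepsilon_0/2$, then play as $x_n$ whichever of $y,z$ is farther (in $Y$) from $f(x_{n-1})$, so that $d_Y(f(x_n),f(x_{n-1}))>\varepsilon_0/4$. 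This is legal and forces $\la f(x_n)\ra$ to fail to be Cauchy, hence to fail to converge to $f(x)$, so Player~I wins. The only point needing care here is the initial reduction to a piece of $P$ carrying a uniform oscillation bound, which is routine.
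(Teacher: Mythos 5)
Your proof is correct, and it follows the paper's skeleton for Theorem~\ref{druga} — two implications, gauges made monotone via Lemma~\ref{sublemma} for Player~II, and a Baire-category argument on a closed set without continuity points for Player~I — but both halves are executed differently, and the Player~II difference is substantive. The paper's Player~II, after normalizing so that $\delta_{\frac{1}{n}}(x)<2^{-n}$, answers with the bare gauge ball $U_n=B(x_n,\delta_{\frac{1}{n}}(x_n)/2)$ and then asserts that the limit $x$ lies in $B(x_n,\delta_{\frac{1}{N}}(x_n))$ for all $n\geq N$; as written this step needs further justification, because the radii of the later balls are controlled from above (by $2^{-m}$) but not from below, so summing the step lengths does not obviously keep $x$ within $\delta_{\frac{1}{N}}(x_n)$ of $x_n$. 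Your nested response $U_n:=U_{n-1}\cap B\left(x_n,\min\left\{2^{-n},\tfrac{1}{2}\delta_{\varepsilon_n}(x_n)\right\}\right)$ traps all later moves, hence the limit, in $\overline{U_n}$, giving $d_X(x_n,x)\leq\tfrac{1}{2}\delta_{\varepsilon_n}(x_n)$ outright; combined with $x_n\to x$ this is exactly what the symmetric condition~(\ref{star}) requires, so your version closes the weak point of the paper's argument (and, as you note, dispenses with the witness points of Lemma~\ref{le:G-strategy-II}, since in $G''_f$ Player~II controls the neighbourhoods). For Player~I, the paper quotes without proof the existence of a perfect set $P$, a point $y_0\in Y$ and $\varepsilon>0$ such that $\{x\in P\colon d_Y(f(x),y_0)<\varepsilon\}$ and $\{x\in P\colon d_Y(f(x),y_0)>2\varepsilon\}$ are both dense in $P$, and alternates between these sets; your oscillation-based greedy choice (essentially the device the paper itself uses for Theorem~\ref{pierwsza}) is self-contained, needing only the Baire Category Theorem and no fixed centre $y_0$, at the harmless cost of the constant $\varepsilon_0/4$. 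Both strategies win for the same reason: $\la f(x_n)\ra$ is forced to be non-Cauchy inside $P$.
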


\begin{proof}
It is enough to prove implications~(1) and~(2) from Lemma~\ref{lem}.

To see the first implication, assume that $f$ is Baire 1 and 
 let $\{\delta_\varepsilon\colon\varepsilon>0\}$ be a family of $\varepsilon$-gauges for $f$.
Without loss of generality (see Lemma~\ref{sublemma}) we may assume 
that, for any fixed $x\in X$, 
the sequence $\la\delta_{\frac{1}{n}}(x)\ra$ is decreasing,  and
$\delta_{\frac{1}{n}}(x)< 2^{-n}$ for every $n>0$.
In the $n$th move, Player~II plays
$U_n := B(x_n,\delta_{\frac{1}{n}}(x_n)/2)$.
This is a winning strategy for Player~II. Indeed, since  
$x_{n+1}\in U_n$, so $d_X(x_n,x_{n+1})<\diam(U_n)\le 2^{-n}$ for every $n$. 
Hence $\langle x_n\rangle$ is a Cauchy sequence in a complete space $X$, so it converges. 
Let $x:=\lim_{n\to\infty}x_n$. Fix $\varepsilon>0$ and $N>1/\varepsilon$.
Then $x\in B(x_n,\delta_{\frac{1}{N}}(x_n))$ for each $n\geq N$, and for all $n$ with $d_X(x,x_n)<\delta_{\frac{1}{N}}(x)$,
we have $d_Y(f(x),f(x_n))<\varepsilon$. Thus $\la f(x_n)\ra$ is convergent to $f(x)$.

Now assume that $f$ is not Baire class 1. 
Then there are a perfect set $P\subseteq X$, $y_0\in Y$ and $\varepsilon>0$ such that both sets 
$A:=\{x\in P\colon d_Y(f(x),y_0)<\varepsilon\}$ 
and $B:=\{ x\in P\colon d_Y(f(x),y_0)>2\varepsilon\}$ are dense in $P$.
The winning strategy for Player~I in the game $G''_f$ consists in choosing $x_n\in A$ for odd $n$  
and $x_n\in B$ for even $n$. 
In fact, if Player~I plays this strategy then the sequence $\la f(x_n)\ra$ is not a Cauchy sequence.
\end{proof}

\section{Games for equi-Baire 1 families of functions}
\label{sec:equi}

In this section, we modify games $G'_f$ and $G''_f$ to obtain characterizations of equi-Baire 1 families of functions.

Let $X$ and $Y$ be Polish spaces, let $\mathcal{F}\subseteq Y^X$. 
At the $n$th step of the game $G'_\mathcal{F}$,
Player I plays $x_n$, then Player II plays $x'_n$,
$$\begin{array}{lllllllll}
\textrm{Player I}  & x_0 &     & x_1 &     & x_2 &     & \cdots &        \\
\textrm{Player II} &     & x'_0 &     & x'_1 &     & x'_2 &        & \cdots
\end{array}$$
with the rules that for each $n\in\mathbb{N}$:
\begin{itemize}
\item $x_n\in X$ and $d_X(x_n,x_{n+1})\leq 2^{-n}$;
\item $x'_n\in X$.
\end{itemize}
Since $X$ is complete, $x_n\to x$ for some $x\in X$. 
Player II wins if
\begin{equation} \label {equi-conv}
\forall_{\varepsilon>0}\; \exists_{N\in\mathbb{N}}\;\forall_{n\geq N}\;\forall_{f\in\mathcal{F}}\;\; d_Y\left(f\left(x'_n\right),f\left(x\right)\right)<\varepsilon.
\end{equation}
(Then we say that~the indexed family of sequences $\{\la f(x'_n)\ra \colon f\in\mathcal F\}$ is \textit{equi-convergent}
to the indexed family 
$\{f(x)\colon f\in\mathcal F\}$).
Otherwise, Player I wins.

We will use the fact that (\ref{equi-conv}) implies the following Cauchy-type condition.
(A proof of this fact is left to the reader.)
\begin{equation} \label{Cauchy}
\forall_{\varepsilon>0}\;\exists_{N\in\mathbb{N}}\;\forall_{n,m\geq N}\;\forall_{f\in\mathcal{F}}\;\; d_Y\left(f\left(x'_n\right),f\left(x'_m\right)\right)<\varepsilon .
\end{equation} 
	 
\begin{theorem} \label{pierwsza}
The game $G'_\mathcal{F}$ is determined, and
\begin{itemize}\label{thm:gF-prim}
\item Player I has a winning strategy in $G'_\mathcal{F}$ if and only if $\mathcal{F}$ is not 
equi-Baire~1.
\item Player II has a winning strategy in $G'_\mathcal{F}$ if and only if $\mathcal{F}$ is equi-Baire~1.
\end{itemize}
\end{theorem}

\begin{proof}
We use the scheme of Lemma~\ref{lem}, so it is enough to show that:
\begin{enumerate}[(i)]
\item if $\mathcal{F}$ is equi-Baire 1 then Player II has a winning strategy, and\label{thm:gF-prim:i}
\item if $\mathcal{F}$ is not of equi-Baire 1 then Player I has a winning strategy.\label{thm:gF-prim:ii}
\end{enumerate}
To prove~(\ref{thm:gF-prim:i}) assume that $\FF$ is equi-Baire 1, fix a family $\Delta:=\{\delta_\varepsilon\colon\varepsilon>0\}$ of positive functions from $X$ into $\bR$ such that, 
	for every $x\in X$, the map $\varepsilon\mapsto\delta_\varepsilon(x)$ is non-decreasing being the family of common $\varepsilon$-gauges for $\FF$,  
the sequence $\la\delta_{\frac{1}{n}}(x)\ra$ is decreasing,  and
$\delta_{\frac{1}{n}}(x)< 2^{-n}$ for every $n>0$.
Then from Lemma~\ref{le:G-strategy-II}  used for $\Delta$ we obtain a function $\$'\colon X^{<\omega}\to X$ which is a winning strategy 
for Player II in the game $G'_\FF$.

To see~(\ref{thm:gF-prim:ii}) note that, if $\mathcal{F}$ is not equi-Baire 1, then there exists a non-empty
perfect set $P\subseteq X$ such that $\F\restriction P:=\{ f\restriction P\colon f\in\F\}$ has no point of equicontinuity (see Theorem~\ref{thm:lecomte}).
Since $P$ is Polish, to increase the readability we can assume, without loss of generality,  that $X=P$.

For any non-empty set $U\subseteq P$ let
$$\omega_{\F}(U):=\sup\left\{d_Y\left(f\left(u\right),f\left(v\right)\right)\colon u,v\in U\textup{\ and\ }  f\in\mathcal{F}\right\}.$$
For any $x\in P$ define the \emph{equi-oscillation of $\mathcal{F}$ at $x$}
as
$$\omega_{\F}(x):=\inf\left\{\omega_{\F}(B(x,h)\cap P)\colon h>0\right\}.$$
It is easy to observe that, for any integer $n>0$, the set
$$P_n:=\left\{x\in P\colon \omega_{\F}(x)\geq\frac{1}{n}\right\}$$
is closed,
and $x$ is a point of equicontinuity for $\mathcal{F}$ if and only if $\omega_{\mathcal{F}}(x)=0$.
Since $\F$ has no point of equicontinuity,
$\bigcup_{n\in\bN}P_n=P$. 
Since $P$ is a Polish space and all $P_n$'s are closed,
by the Baire Category Theorem there exists $P_n$ with non-empty interior.
Thus, without loss of generality, we may assume that for some $\varepsilon>0$, $\omega_{\mathcal{F}}(x)\geq\varepsilon$ for each $x\in P$.

We are ready to provide a strategy $\$$ for Player I.
In the first move he
picks $x_0\in P$. 
For $n>0$, in the $n$th move Player I takes $\$(x'_0,\ldots,x'_{n-1})=x_n\in P$ with 
$$x_n:=\left\{\begin{array}{ll}
x_{n-1} & \textup{if there exists\ } f\in\mathcal{F} \textup{\ such that\ } d_Y(f(x'_{n-1}), f(x_{n-1}))\geq\varepsilon/3;\\
a & \textup{otherwise,}
\end{array}\right.$$
where $a\in P$ and
\begin{itemize}
\item[(j)] $d_X(x_{n-1},a)<1/2^n$;\label{thm:gF-prim:ii:i}
\item[(jj)] $d_Y(f(x'_{n-1}), f(a))\geq 2\varepsilon/3$ for some $f\in\mathcal{F}$.\label{thm:gF-prim:ii:ii}
\end{itemize}
Such a choice is possible because the $P$ is dense-in-itself, $x_{n-1}\in P$, and $\omega_{\mathcal{F}}(x_{n-1})\geq\varepsilon$.

If the family of sequences $\{ \la f(x'_n)\ra\colon f\in\FF\}$ is not equi-convergent then Player~I wins. 
Otherwise, we use (\ref{Cauchy}). So, there exists $N\in\bN$ such that for all $n,m\geq N$ and all $f\in\mathcal{F}$,
\begin{equation} \label{star:x}
d_Y\left(f\left(x'_n\right),f\left(x'_m\right)\right)<\frac{\varepsilon}{6}.
\end{equation}
We claim that there exists $M\geq N$ with $x_n=x_M$ for all $n\geq M$.

We have two possibilities: either $x_m=x_{m-1}$ for all $m>N$, or
there exists $m_1>N$ such that $x_{m_1}\not=x_{m_1-1}$.
Since in the first case we are done, we assume the second one.
Then, 
by the formula defining $x_n$,
$$d_Y\left(f\left(x'_{m_1-1}\right), f\left(x_{m_1-1}\right)\right)<\frac{\varepsilon}{3} \textup{\ for each\ } f\in\mathcal{F}.$$
It follows from (jj) that 
\begin{equation} \label{star:xx}
d_Y\left(f_1\left(x'_{m_1-1}\right), f_1\left(x_{m_1}\right)\right)\geq \frac{2\varepsilon}{3}
\textup{\ for some\ } f_1\in\mathcal{F}.
\end{equation}
Thus, by $(\ref{star:x})$ and $(\ref{star:xx})$, for all $m\geq N$ we have
$$d_Y\left(f_1\left(x'_m\right), f_1\left(x_{m_1}\right)\right)
\geq
d_Y\left(f_1\left(x'_{m_1-1}\right), f_1\left(x_{m_1}\right)\right)
-
d_Y\left(f_1\left(x'_{m}\right), f_1\left(x'_{m_1-1}\right)\right)$$
$$\geq\frac{2\varepsilon}{3}-\frac{\varepsilon}{6}
=\frac{\varepsilon}{2}
\geq\frac{\varepsilon}{3}.$$
Hence, by the definition of $x_n$ for $n=m_1+1$ we obtain the equality $x_n=x_{m_1}$, so $f_{1}(x_n)=f_{1}(x_{m_1})$.
Therefore, $d_Y\left(f_1\left(x'_{n+1}\right), f_1\left(x_{n}\right)\right)\geq\frac{\varepsilon}{3}$, 
so by the definition of $x_{n+1}$ we get $x_{n+1}=x_n=x_{m_1}$. 
In this way we show, by induction, that $x_n=x_{m_1}$ for all $n\ge m_1$.
This finishes the proof of the claim.

Since  the sequence constructed by Player I is eventually constant, i.e.~$x_m=x_M$ for all $m\geq M$,
so $\lim_{n\to\infty}x_n=x_M$. 
Recall that in both variants of the formula defining $x_n$,
$$\forall_{n>0}\;\exists_{f\in\mathcal{F}}\;\; d_Y\left(f\left(x'_n\right), f\left(x_n\right)\right)\geq\frac{\varepsilon}{3}.$$
Therefore, since $x_m=x_M=\lim_{n\to\infty}x_n$ for all $m\geq M$,
$$\forall_{m\geq M}\; \exists_{f\in\mathcal{F}}\;\; d_Y\left(f\left(x'_m\right),f\left(\lim_{n\to\infty}x_n\right)\right)\geq\frac{\varepsilon}{3}.$$ 
Fix $f_2\in\mathcal{F}$ with
$$d_Y\left(f_2\left(x'_M\right),f_2\left(\lim_{n\to\infty}x_n\right)\right)\geq\frac{\varepsilon}{3}.$$ 
By $(\ref{star:x})$, for every $n\ge N$ we have
$$d_Y\left(f_2\left(x'_n\right),f_2\left(\lim_{n\to\infty}x_n\right)\right)\geq\frac{\varepsilon}{6},$$ 
thus $\la f_2(x'_n)\ra$ does not converge to $f_2\left(\lim_{n\to\infty}x_n\right)$, so Player I wins.
\end{proof}

Now, we will describe the game $G''_\F$ which is a modification of $G''_f$ for equi-Baire 1 families.


Let $X$ and $Y$ be Polish spaces, let $\mathcal{F}\subseteq Y^X$. 
At the first step of the game $G''_\mathcal{F}$,
Player I plays $x_0\in X$
and then Player II plays an open set $U_0\ni x_0$.
At the $n$th step of the game $G''_\mathcal{F}$,
Player I plays $x_n\in U_{n-1}$, then Player II plays an open set $U_n\ni x_n$:
$$\begin{array}{lllllllll}
	\textrm{Player I}  & x_0 &      & x_1 &      & x_2 &      & \cdots &        \\
	\textrm{Player II} &     & U_0 &     & U_1 &     & U_2 &        & \cdots
\end{array}$$
with the rules that for each $n\in\mathbb{N}$:
\begin{itemize}
	\item $x_0\in X$, and $x_n\in U_{n-1}$ for $n>0$;
	\item $U_n\ni x_n$.
\end{itemize}
Player II wins if
the sequence $\la x_n\ra$ converges to some $x\in X$, and  
the indexed family $\{ \la f(x_n)\ra \colon f\in\mathcal{F}\}$ is equi-convergent to $\{ f(x)\colon f\in\mathcal{F}\}$.
Otherwise, Player I wins.

\begin{theorem} \label{equi}
The game $G''_\mathcal{F}$ is determined, and
\begin{itemize}
	\item Player I has a winning strategy in $G''_\mathcal{F}$ if and only if $\mathcal{F}$ is not  equi-Baire 1.
	\item Player II has a winning strategy in $G''_\mathcal{F}$ if and only if $\mathcal{F}$ is equi-Baire 1.
\end{itemize}
\end{theorem}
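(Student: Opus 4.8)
The plan is to prove the two implications ``if $\mathcal{F}$ is equi-Baire 1 then Player~II has a winning strategy'' and ``if $\mathcal{F}$ is not equi-Baire 1 then Player~I has a winning strategy''. Since these cover all cases and no game admits winning strategies for both players, the determinacy and both equivalences follow at once. This parallels the structure used for Theorems~\ref{druga} and~\ref{pierwsza}; the new point is that everything must be made uniform over $\mathcal{F}$.

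For the first implication I would start from a common family of $\varepsilon$-gauges $\{\delta_\varepsilon\colon\varepsilon>0\}$ for $\mathcal{F}$, observe that the construction in Lemma~\ref{sublemma} applies verbatim to a common family (it uses only the gauge inequality, which holds for every $f\in\mathcal{F}$ at once), and after shrinking assume that for each $x$ the sequence $\la\delta_{1/n}(x)\ra$ is decreasing with $\delta_{1/n}(x)<2^{-n}$. At stage $n$ Player~II then plays an open ball $U_n=B(x_n,r_n)$ with $r_n\le\delta_{1/(n+1)}(x_n)/2$ chosen small enough that $\overline{U_n}\subseteq U_{n-1}$. The nestedness forces $x_m\in U_n$ for all $m>n$, so the limit $x=\lim_n x_n$ lies in $\overline{U_n}\subseteq B(x_n,\delta_{1/(n+1)}(x_n))$; together with $x_n\to x$ this gives, for fixed $\varepsilon>0$ and $N>1/\varepsilon$, both $d_X(x_n,x)<\delta_{1/N}(x_n)$ (from the ball containment, for $n\ge N-1$) and $d_X(x_n,x)<\delta_{1/N}(x)$ (for $n$ large). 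Because $\delta_{1/N}$ is common, the gauge inequality yields $d_Y(f(x_n),f(x))<1/N<\varepsilon$ simultaneously for every $f\in\mathcal{F}$, which is exactly the required equi-convergence.

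For the second implication I would invoke Theorem~\ref{thm:lecomte} to obtain a non-empty perfect $P$ on which $\mathcal{F}\restriction P$ has no point of equicontinuity, and then run the equi-oscillation/Baire-category reduction from the proof of Theorem~\ref{pierwsza}: passing to a relatively open piece of $P$ I may assume there is $\varepsilon>0$ with $\omega_{\mathcal{F}\restriction P}(x)\ge\varepsilon$ for every $x\in P$. Player~I keeps all of his points in $P$, starting with some $x_0\in P$. At stage $n>0$, given the open set $U_{n-1}\ni x_{n-1}$, the inequality $\omega_{\mathcal{F}\restriction P}(x_{n-1})\ge\varepsilon$ provides $u,v\in U_{n-1}\cap P$ and $f\in\mathcal{F}$ with $d_Y(f(u),f(v))>\varepsilon/2$; by the triangle inequality one of $u,v$, say $w$, satisfies $d_Y(f(w),f(x_{n-1}))>\varepsilon/4$, and Player~I plays $x_n:=w\in U_{n-1}\cap P$. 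This is legal and produces, for every $n\ge 1$, some $f_n\in\mathcal{F}$ with $d_Y(f_n(x_n),f_n(x_{n-1}))>\varepsilon/4$. If $\la x_n\ra$ fails to converge, Player~I already wins; if $x_n\to x$, then equi-convergence of $\{\la f(x_n)\ra\colon f\in\mathcal{F}\}$ to $\{f(x)\colon f\in\mathcal{F}\}$ would give $N$ with $d_Y(f(x_n),f(x))<\varepsilon/8$ for all $n\ge N$ and all $f$, whence $d_Y(f_n(x_n),f_n(x_{n-1}))<\varepsilon/4$ for $n>N$, contradicting the construction. So equi-convergence fails and Player~I wins.

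The main obstacle I expect is the second implication, specifically reconciling the point-open constraint $x_n\in U_{n-1}$ with forcing a failure of equi-convergence: the decisive observation is that after the Baire-category reduction every point of $P$ has equi-oscillation at least $\varepsilon$, so no matter how small a neighbourhood $U_{n-1}$ Player~II offers, Player~I can still step to a nearby point of $P$ whose value under some $f\in\mathcal{F}$ differs from the previous value by more than $\varepsilon/4$. A secondary technical point on the Player~II side is guaranteeing that the limit actually lands inside the gauge balls $B(x_n,\delta_{1/N}(x_n))$; I would handle this by playing nested neighbourhoods rather than the bare balls $B(x_n,\delta_{1/n}(x_n)/2)$.
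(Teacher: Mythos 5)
Your proposal is correct, and its overall architecture is the same as the paper's: the same two implications (which together give determinacy and both equivalences), common gauges normalized via Lemma~\ref{sublemma} for Player~II, and Theorem~\ref{thm:lecomte} plus the equi-oscillation/Baire-category reduction of Theorem~\ref{pierwsza} for Player~I. On the Player~I side the difference is cosmetic: you finish with a direct triangle-inequality contradiction where the paper invokes the Cauchy-type condition~(\ref{Cauchy}); both arguments produce for each $n$ some $f_n\in\mathcal{F}$ whose increment $d_Y(f_n(x_n),f_n(x_{n-1}))$ stays bounded away from $0$, which is incompatible with equi-convergence.

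Your Player~II strategy, however, genuinely departs from the paper's, and this is the more valuable part of your write-up. The paper (following its proof of Theorem~\ref{druga}) has Player~II play the bare balls $U_n=B(x_n,\delta_{\frac{1}{n}}(x_n)/2)$ and then asserts that the limit $x$ lies in $B(x_n,\delta_{\frac{1}{N}}(x_n))$ for all $n\geq N$. That assertion does not follow: the balls need not be nested, and the gauges at the visited points can shrink as fast as the distance to the limit, so the strategy can actually lose. For instance, take $X=Y=\bR$, $\mathcal{F}=\{f\}$ with $f=\mathbf{1}_{\{0\}}$, and gauges $\delta_{\frac{1}{n}}(v)=\min\{|v|,2^{-n-1}\}$ for $v\neq 0$, $\delta_{\frac{1}{n}}(0)=2^{-n-1}$; these satisfy condition~(\ref{star}) (since $d_X(0,v)=|v|\geq\delta_{\frac{1}{n}}(v)$), are non-increasing in $n$, and are $<2^{-n}$, so they are exactly what Lemma~\ref{sublemma} provides. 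Against the paper's strategy, Player~I can play $x_n=\frac{n+1}{n+2}\,2^{-n-1}$: each move is legal because the offered radius is $\min\{x_n,2^{-n-1}\}/2=x_n/2$ and $x_{n+1}>x_n/2$ (as $\frac{n+2}{n+3}>\frac{n+1}{n+2}$), yet $x_n\to 0$ while $f(x_n)=0\not\to 1=f(0)$, so Player~I wins. Your extra requirement $\overline{U_n}\subseteq U_{n-1}$ with radius $r_n\leq\delta_{\frac{1}{n+1}}(x_n)/2$ is precisely what blocks this: nestedness puts the limit in $\overline{U_n}$, hence $d_X(x,x_n)<\delta_{\frac{1}{n+1}}(x_n)$, so the (common) gauge inequality applies uniformly in $f\in\mathcal{F}$ and equi-convergence follows. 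In short, your proof not only establishes the theorem but repairs a genuine gap in the published argument for this theorem and for Theorem~\ref{druga} alike; the theorems themselves remain true, as your corrected strategy shows.
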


\begin{proof}
Firstly, we show that, if $\mathcal F$ is equi-Baire 1, then Player II has a winning strategy.
We follow proof of Theorem \ref{druga}.
Let $\mathcal F$ be equi-Baire 1 and $\Delta=\{ \delta_\varepsilon\colon \varepsilon>0\}$ be the family of common gauges for $\FF$. 
Then for every $x\in X$,  $\delta_\varepsilon(x)$ does not depend on 
$f\in\mathcal F$.  We may assume 
that, for any fixed $x\in X$, 
the sequence $\la\delta_{\frac{1}{n}}(x)\ra$ is decreasing,  and
$\delta_{\frac{1}{n}}(x)< 2^{-n}$ for every $n>0$.
So, we choose $U_n:=B(x_n,\delta_{\frac{1}{n}}(x_n)/2)$.
Then $x_n \to x$ and note that the index $N$ such that $d_Y(f(x),f(x_n))<\varepsilon$ for all $n>N$ does not depend on $f\in\mathcal F$.
Hence the family of sequences $\{ \la f(x_n)\ra \colon f\in\mathcal F\}$ is equi-convergent 
to $\{f(x)\colon f\in \mathcal F\}$ and we are done.

Secondly, assuming that $\mathcal F$ is not equi-Baire~1, we will show that Player I has a winning strategy.
We follow the respective part in the proof of Theorem~\ref{pierwsza}. We can assume that there exist
a perfect set $P\subseteq X$ and $\varepsilon >0$ such that $\omega_{\mathcal{F}\restriction P}(x)\ge\varepsilon$ for each $x\in P$. Initially, Player I picks $x_0\in P$.
Let $n>0$. Since $x_{n-1}\in P$, we have $\omega_{\mathcal{F}\restriction P}(x_{n-1})\ge\varepsilon$. Thus, knowing that $U_{n-1}$ is an open neighbourhood of $x_{n-1}$, Player I can choose $x_n\in U_{n-1}$ and $f_n\in\mathcal F$ such that 
$$
d_Y({f_n}(x_{n-1}),f_n(x_n))\ge\frac{\varepsilon}{3}.
$$
This, by condition (\ref{Cauchy}), shows that
the family of sequences $\{\langle f(x_n)\rangle\colon f\in\mathcal F\}$ is not equi-convergent.
So, we have a winning strategy for Player~I.
\end{proof}

\section{Game characterization of measurable functions}

In this section, we  propose another modification of the game $G''_f$  to obtain characterizations
of Baire measurable and Lebesgue measurable functions. 

Let $\Sigma$ be a $\sigma$-algebra of subsets of a set $Z\neq\emptyset$. A function $f\colon Z\to Y$, where $Y$ denotes a topological space,
is called {\em $\Sigma$-measurable} if the preimage $f^{-1}[U]$ of any open set $U$ in $Y$ belongs to $\Sigma$.
Note that if $Y$ is a separable metric space, then $f$ is measurable whenever the preimage $f^{-1}[B]\in\Sigma$ for any open ball $B\subset Y$.

Let $H(\Sigma)$ be the $\sigma$-ideal given by
$H(\Sigma):=\{ A\subseteq Z\colon \forall B\subseteq A,\; B\in\Sigma\}$. Denote $\Sigma^+:=\Sigma\setminus H(\Sigma)$.
We say that the $\sigma$-algebra $\Sigma$ satisfies {\em condition ccc} if every family of pairwise disjoint sets in 
$\Sigma^+$ is countable.

We will use the following lemma.

\begin{lemma} \label{nonmeas}
Let $(Y,d)$ be a separable metric space and $\Sigma$ be a $\sigma$-algebra of subsets of $Z$ that satisfies condition ccc.
A function $f\colon Z\to Y$ is not $\Sigma$-measurable if and only if there exist a set $W\in\Sigma^+$, a point $y\in Y$ and $\varepsilon >0$ such that the sets $\{ z\in W\colon d(f(x),y)<\varepsilon\}$ and
$\{ z\in W\colon d(f(x),y)\ge 2\varepsilon\}$ intersect every subset of $W$ that belongs to $\Sigma^+$.
\end{lemma}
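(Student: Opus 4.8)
The plan is to prove both directions of the equivalence by directly unwinding the definition of $\Sigma$-measurability together with the structure of the $\sigma$-ideal $H(\Sigma)$. The key observation I would establish first is a reformulation of what it means for a set to belong to $\Sigma^+$: a set $S$ intersects every subset of $W$ belonging to $\Sigma^+$ precisely when $W\setminus S\in H(\Sigma)$, i.e. when $S$ contains $W$ up to a set all of whose subsets are in $\Sigma$. Indeed, if $W\setminus S$ had a subset in $\Sigma^+$, that subset would be disjoint from $S$; conversely if $S$ misses some $V\subseteq W$ with $V\in\Sigma^+$ then $V\subseteq W\setminus S$. So the hypothesis ``both $\{x\in W:d_Y(f(x),y)<\varepsilon\}$ and $\{x\in W:d_Y(f(x),y)\ge 2\varepsilon\}$ meet every $\Sigma^+$-subset of $W$'' is equivalent to saying both these sets are ``$\Sigma^+$-large'' in $W$, and I will use this large/small dichotomy throughout.

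For the direction ``$f$ not $\Sigma$-measurable $\Rightarrow$ such $W,y,\varepsilon$ exist'', I would argue as follows. Since $Y$ is separable, non-measurability gives an open ball $B(y,\varepsilon)$ with $f^{-1}[B(y,\varepsilon)]\notin\Sigma$, hence $f^{-1}[B(y,\varepsilon)]\in\Sigma^+$ (a set not in $\Sigma$ cannot lie in $H(\Sigma)$). Set $W:=f^{-1}[B(y,\varepsilon)]$; I would then need to fix up the radii so that the complementary piece is also large. The natural move is to consider the countable cover of $Y$ by balls $B(y,r)$ for rational $r$ and exploit that if every ``outer'' piece were $\Sigma$-small the preimage would be measurable. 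Concretely, I expect to choose $y$ and $\varepsilon$ so that $A:=\{x\in W:d_Y(f(x),y)<\varepsilon\}$ is in $\Sigma^+$ and its relative complement in $W$ is not $H(\Sigma)$-small; by shrinking to the annulus $\{d_Y(f(x),y)\ge 2\varepsilon\}$ via a further separability/countability argument over rational radii, one locates the $W,y,\varepsilon$ making both pieces large. The core of this direction is a Baire-category-style pigeonhole: $\Sigma^+$ is a coideal, so it is closed under the relevant countable unions in the sense that a countable union landing in $H(\Sigma)$ forces each piece into $H(\Sigma)$; this lets me extract a single ball witnessing the failure.

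The reverse direction ``such $W,y,\varepsilon$ exist $\Rightarrow$ $f$ not $\Sigma$-measurable'' is the easier one. Suppose for contradiction $f$ were $\Sigma$-measurable. Then $A:=\{x\in W:d_Y(f(x),y)<\varepsilon\}=W\cap f^{-1}[B(y,\varepsilon)]\in\Sigma$, and if $A\in\Sigma^+$ it is itself a $\Sigma^+$-subset of $W$ disjoint from the set $\{x\in W:d_Y(f(x),y)\ge 2\varepsilon\}$, contradicting that the latter meets every $\Sigma^+$-subset of $W$. The only remaining case is $A\in H(\Sigma)$, but then $A$ contains no $\Sigma^+$-subset at all while being required to meet every $\Sigma^+$-subset of $W$ (and $W\in\Sigma^+$ contains some $\Sigma^+$-subset, e.g. a subset of $A$ would have to exist) --- this forces a contradiction directly. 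So $f$ cannot be $\Sigma$-measurable.

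I expect the main obstacle to be the non-measurable direction, specifically the bookkeeping needed to upgrade a single ``bad ball'' into a pair of simultaneously large sets separated by the gap between $\varepsilon$ and $2\varepsilon$. The subtlety is that $H(\Sigma)$ is only a $\sigma$-ideal and $\Sigma^+$ is a coideal rather than a filter, so I must be careful that the countable-union pigeonhole is applied in the correct (hereditary) form; once the large/small reformulation from the first paragraph is in place, the separability of $Y$ reduces everything to a countable case analysis over rational centers and radii, and the argument goes through.
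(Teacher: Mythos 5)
The paper gives no proof of this lemma (it is dismissed as ``a simple exercise''), so there is nothing to compare against; judged on its own, your proposal contains two genuine errors and is missing the key construction. The central problem is that your ``key observation'' is false, and you announce you will use it throughout. Only one direction of it holds: if $W\setminus S\in H(\Sigma)$, then $S$ meets every $\Sigma^+$-subset of $W$. The correct equivalent of ``$S$ meets every $\Sigma^+$-subset of $W$'' is the strictly weaker statement that $W\setminus S$ \emph{contains no} $\Sigma^+$-subset, i.e.\ that every $\Sigma$-measurable subset of $W\setminus S$ lies in $H(\Sigma)$; this does not imply $W\setminus S\in H(\Sigma)$, because $W\setminus S$ need not be measurable at all. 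For Lebesgue measure (where $H(\Sigma)$ is the ideal of null sets and $\Sigma^+$ the measurable sets of positive measure), a Bernstein set $S\subseteq[0,1]$ and its complement both meet every positive-measure set, yet neither is null. Worse, if your reformulation were correct, the right-hand side of the lemma could never hold: since $A\cap B=\emptyset$ we have $W=(W\setminus A)\cup(W\setminus B)$, and $H(\Sigma)$ is an ideal, so $W\setminus A,\,W\setminus B\in H(\Sigma)$ would force $W\in H(\Sigma)$, contradicting $W\in\Sigma^+$. Thus the condition you are trying to realize in the forward direction is unsatisfiable under your reading; the sets $A$ and $B$ of the lemma are simultaneously large only in the ``inner'' sense.

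The forward direction also begins with a misreading of the definition: $\Sigma^+:=\Sigma\setminus H(\Sigma)$ consists of \emph{measurable} sets, so $f^{-1}[B(y,\varepsilon)]\notin\Sigma$ does not give $f^{-1}[B(y,\varepsilon)]\in\Sigma^+$, and $W:=f^{-1}[B(y,\varepsilon)]$ is not an admissible witness. This is not cosmetic: a countable-union pigeonhole can only ever produce sets that are \emph{not in} $H(\Sigma)$, never sets that are in $\Sigma^+$ (which additionally requires measurability), and bridging that gap is the whole content of the lemma. In the two cases where the paper applies it, the bridge is the existence of measurable hulls and kernels: from $E:=f^{-1}[B(y,s)]\notin\Sigma$ take a kernel $K\subseteq E$ and a hull $H\supseteq E$ and set $W_0:=H\setminus K\in\Sigma^+$, so that neither $E\cap W_0$ nor $W_0\setminus E$ contains a $\Sigma^+$-set; the $\varepsilon$ versus $2\varepsilon$ gap then still needs an extra step (shrink $s$ slightly, cover the smaller ball by countably many balls $B(z,\rho)$ with $B(z,2\rho)\subseteq B(y,s)$, pick one whose trace on $W_0$ has a hull in $\Sigma^+$, and let $W$ be that hull). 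No purely ``soft'' countability argument can replace this: for an arbitrary $\sigma$-algebra the forward implication is in fact false --- e.g.\ on $Z=\{0,1\}^{\omega_1}$ with $\Sigma$ the sets determined by countably many coordinates, one has $H(\Sigma)=\{\emptyset\}$, the indicator of a singleton $\{x_0\}$ is non-measurable, and yet no $W,y,\varepsilon$ satisfy the condition, since every nonempty $W\in\Sigma$ has a nonempty measurable subset avoiding $x_0$. Finally, your backward direction is correct in the case $A\in\Sigma^+$, but the case $A\in H(\Sigma)$ is not closed by your parenthetical remark; what is needed there is that $W\setminus A\in\Sigma$ and $W\setminus A\notin H(\Sigma)$ (otherwise $W=A\cup(W\setminus A)\in H(\Sigma)$), so $W\setminus A$ is a $\Sigma^+$-subset of $W$ disjoint from $A$, contradicting the hypothesis on $A$.
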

\begin{proof} ``$\Leftarrow$'' Take the open ball $B:=B(y,\varepsilon)$ in $Y$. 
The assumed condition implies that $W\cap f^{-1}[B]\not\in\Sigma$, hence $f^{-1}[B]\not\in\Sigma$ and consequently, $f$ is not $\Sigma$-measurable. 

``$\Rightarrow$'' Assume that $f$ is not $\Sigma$-measurable. Then there exist $y\in Y$ and $\varepsilon>0$ such that $f^{-1}[B(y,\varepsilon)]\not\in\Sigma$. 
Thus $A:=\{ z\in Z\colon d(f(z),y)<\varepsilon\}\not\in\Sigma$ and $B:=\{ z\in Z\colon d(f(z),y)\ge\varepsilon\}\not\in\Sigma$. 
Let $\mathcal{A}$ (respectively, $\mathcal{B}$) be a maximal family of pairwise disjoint $\Sigma^+$-subsets of $A$ (respectively, $B$). 
By condition ccc, the sets $\bigcup\cA$ and $\bigcup\cB$ belong to $\Sigma$. 
Let $A_0:=A\setminus\bigcup\cA$, $B_0:=B\setminus\bigcup\cB$, and $V:=A_0\cup B_0$. 
Then $A_0,B_0\not\in\Sigma$, and $V=Z\setminus (\bigcup\cA\cup\bigcup\cB)$, hence  $V\in \Sigma$.
Notice that for every $C\subseteq V$, if $C\in\Sigma^+$ then $A\cap C\ne\emptyset\ne B\cap C$. 

Since the space $Y$ is separable, there exists a sequence of open balls $B(y_n,\varepsilon_n)$, $n\in\bN$, 
such that $\bigcup_{n\in\bN}B(y_n,\varepsilon_n)=B(y,\varepsilon)$ and $d(y,y_n)+2\varepsilon_n<\varepsilon$
for all $n$.
Since $V\cap A=A_0\not\in\Sigma$,
there is $m\in\bN$ such that the sets $C:=V\cap f^{-1}[B(y_m,\varepsilon_m)]$ and $D:=V\setminus f^{-1}[B(y_m,\varepsilon_m)]$ 
are not in $\Sigma$. Again, let $\cC$ and $\cD$ be maximal families of pairwise disjoint $\Sigma^+$-sets contained in $C$ and $D$, respectively. 
Define $C_0=C\setminus\bigcup\cC$, $D_0=D\setminus\cD$, and $W=C_0\cup D_0=V \setminus (\bigcup\cC\cup\bigcup\cD)$. 
Then for any $S\subset W$, if $S\in\Sigma^+$ then $S\cap C_0\ne\emptyset$ and $d(f(z),y_m)<\varepsilon_m$ for each $z\in S$. 
On the other hand, $S\subseteq V$, hence $S\cap B\ne \emptyset$ and $d(f(z),y)\ge\varepsilon$ for $z\in S\cap B$, 
and then $d(f(z),y_m)\ge d(f(z),y)-d(y,y_m)\ge \varepsilon +2\varepsilon_m-\varepsilon = 2\varepsilon_m$.
\end{proof}

Let $X$ and $Y$ be topological Hausdorff spaces, $\Sigma$ be a $\sigma$-algebra on $X$, and $f\colon X\to Y$ be  an arbitrary function. 
We define the following game $G^\Sigma_f$.
At the first step of the game $G^\Sigma_f$, Player I plays $W\in\Sigma^+$, 
then Player II plays a set $U_0$.
At the $n$th step, 
where $n>0$,
Player I plays $x_n$ and Player II plays a $U_n$: 
$$\begin{array}{lllllllll}
	\textrm{Player I}  & W &      & x_1 &      & x_2 &      & \cdots &        \\
	\textrm{Player II} &     & U_0 &     & U_1 &     & U_2 &        & \cdots
\end{array}$$
with the rules that for each $n\in\mathbb{N}$:
\begin{itemize}
	\item  $x_n\in U_{n-1}$ for $n>0$;
	\item $U_n\in\Sigma^+$ and $U_n\subseteq W$.
\end{itemize}
Player II wins the game $G^\Sigma_f$ if 
$\la x_n\ra $ is convergent and $\lim_{n\to\infty} f(x_n)=f(\lim_{n\to\infty}x_n)$. Otherwise Player I wins.

\begin{lemma}\label{ostatni}
Assume that $X$ is a topological Hausdorff space and $\Sigma$ is a $\sigma$-algebra of subsets of $X$ that satisfies condition ccc. Let $(Y,d)$ be a separable metric space.
If $f\colon X\to Y$ is not $\Sigma$-measurable then Player I has a winning strategy in the game {$G^\Sigma_f$}.
\end{lemma}
\begin{proof}
Assume that $f$ is not $\Sigma$-measurable. 
By Lemma~\ref{nonmeas} there exist a set $W\subseteq X$, $y\in Y$ and $\varepsilon>0$ such that $W\in\Sigma^+$ and both sets 
$A:=\{ x\in W\colon d_Y(f(x),y)<\varepsilon\}$ and $B:=\{ x\in W\colon d_Y(f(x),y)\ge 2\varepsilon\}$ intersect
every subset of $W$ which is in $\Sigma^+$.
Let Player~I play the following strategy. 
At the first step, he
chooses the set $W\in\Sigma^+$ obtained above.
If $n>0$, then $U_{n-1}\subseteq W$ and he
chooses $x_n\in A\cap U_{n-1}$ when $n$ is even and $x_n\in B\cap U_{n-1}$ when $n$ is odd. 
Then $d(f(x_{2n}),y)\le\varepsilon$ and $d(f(x_{2n+1}),y)\ge 2\varepsilon$, 
so {$d(f(x_{2n}),f(x_{2n+1}))\ge\varepsilon$} for every $n$, and therefore $\la f(x_n) \ra$ is not convergent.	
\end{proof}
First, we will characterize Baire measurable functions,
 that is $\Sigma$-measurable functions, where $\Sigma=\Baire$ denotes
the $\sigma$-algebra of sets with the Baire property in a topological space, cf.~\cite[8.21]{Ke}. 
Note that the ideal $H(\Baire)$ is equal to the family of all meager sets,
cf.~\cite[Theorem 5.5]{JO} (recall that we assume AC) and if $X$ is second countable then the algebra $\Baire$ satisfies condition ccc, 
cf.~\cite[Theorem 7.5]{LB}.

\begin{theorem} \label{cat}
Let $X$ be a Polish space, $Y$ be a separable metric space, and $f\colon X\to Y$ be a function.
	Then the game $G^\Baire_f$ 
	is determined, and
	\begin{itemize}
		\item Player I has a winning strategy in $G^\Baire_f$ if and only if $f$ is not Baire measurable;
		\item Player II has a winning strategy in $G^\Baire_f$ if and only if $f$ is Baire measurable.
	\end{itemize}
\end{theorem}
\begin{proof}
We apply Lemma~\ref{lem}, thus we have to prove two implications:
\begin{enumerate}[(i)]
	\item
	if $f$ is Baire measurable then Player II has a winning strategy in the game $G^\Baire_f$;
	\item
	if $f$ is not Baire measurable then Player I has a winning strategy in $G^\Baire_f$.
\end{enumerate}

To prove (i) assume that $f$ is Baire measurable. We will describe a winning strategy for Player II in the game $G^\Baire_f$.
Let $G\subseteq X$ be a dense $G_\delta$ set such that \mbox{$f\restriction G$} is continuous. (See \cite[Theorem 8.38]{Ke}.)
Let $W\in\Baire^+$ 
	be chosen by Player I at the first move. Then Player II fixes any point $a\in G\cap W$ and picks $U_0:=B(a,1)\cap G\cap W$. At the $(n+1)$-th move, Player I chooses $x_{n+1}\in U_n$.
Then Player II plays $U_{n+1}:=\{x_{n+1}\}\cup (B(a,\frac{1}{n+1})\cap G\cap W)$. When the game is finished, one of the two cases is possible: 
either $x_n=x_N$ for some $N\in\bN$ and all $n>N$, or
for every $\varepsilon>0$ there is $N\in\bN$ such that $x_n\in B(a,\varepsilon)\cap G$ for all $n>N$,
 which implies $\lim_{n\to\infty}x_n=a$.
In both cases, $\lim_{n\to\infty}f(x_n)=f(\lim_{n\to\infty}x_n)$.

The implication~(ii) follows from Lemma~\ref{ostatni} with $\Sigma=\Baire$.
\end{proof}

A similar idea can be used to characterize Lebesgue measurable functions from $X:=\bR^k$ to a separable metric space $Y$. 
Let $\Leb$ denote the $\sigma$-algebra of Lebesgue measurable subsets of $\bR^k$. 
Note that the ideal $H(\Leb)$ consists exactly of Lebesgue null sets in $\bR^k$, cf. \cite[Theorem 5.5]{JO},
and $\Leb$ satisfies condition ccc, cf.~\cite[Theorem 7.5]{LB}.

\begin{theorem} \label{meas}
	Let $X=\bR^k$, $Y$ be a separable metric space, and $f\colon X\to Y$ be a function.
	Then the game $G^\Leb_f$ is determined, and
	\begin{itemize}
		\item Player I has a winning strategy in $G^\Leb_f$ if and only if $f$ is not measurable;
		\item Player II has a winning strategy in $G^\Leb_f$ if and only if $f$ is measurable.
	\end{itemize}
\end{theorem}
\begin{proof}
We use Lemma~\ref{lem}. First assume that $f$ is measurable. 
	We will describe a winning strategy for Player II.   
	Let $W\in\Leb^+$ 
		be chosen at the initial move by Player~I. 
	Let $F\subseteq W$ be a compact set with positive measure.
	By the Lusin theorem (applied to the space $F$ with the restricted Lebesgue measure),
	see \cite[Theorem 17.12]{Ke},
	there exists a closed set $F_0\subseteq {F}$ 
	such that $f\restriction F_0$ is continuous and the Lebesgue measure 
	of $F_0$ is finite and positive. Then Player~II picks $U_0:=F_0$. 
	At the $(n+1)$-th move, Player~I chooses $x_{n+1}\in U_n$.
	Then Player~II plays $U_{n+1}:=\{ x_{n+1}\}\cup F_{n+1}$ where $F_{n+1}\subseteq F_n$ is a closed set of a positive measure
	with the diameter less than $\frac{1}{n+1}$. When the game is finished, we have $\bigcap_{n\ge 0} F_n=\{ a\}$ 
	for some $a\in X$. As in the previous proof,
	we infer that $\la x_n\ra$ is eventually constant or
	 $x_n\to a\in F_0$,
	and so $\lim_{n\to\infty}f(x_n)=f(\lim_{n\to\infty}x_n)$.
	
	The second implication follows from Lemma~\ref{ostatni} with $\Sigma=\Leb$.
\end{proof}

\begin{remark}
By \cite[Theorem 17.12]{Ke}, see also \cite{St}, Theorem~\ref{meas} can be extended to the case when $X$ is a Polish space equipped with a $\sigma$-finite Borel regular measure.
\end{remark}

\subsection*{Acknowledgements}
	We would like to thank the referee for
	several useful remarks that have helped us to improve the former version of the
	paper.
 \subsection*{Compliance with Ethical Standards}
The authors declare that they have no conflict of interest, and add that there is no data
associated with this work.


\end{document}